\newtheorem{theorem}{Theorem}[section]
\theoremstyle{definition}
\theoremstyle{remark}
\newtheorem{remark}[theorem]{Remark}
\numberwithin{equation}{section}
\def\I{{\rm{I\!\!I}}}
\def\F{{\mathcal{ F}}}
\def\R{{\mathbb{R}} }
\def\N{{\mathbb{N}} }
\def\E{{\mathbb{E}}  }
\def\P{{\mathbb{P}}  }
\def\I{{\mathbb{I}}}
\def\Tr{{\rm{Tr}}}
\def\bint#1^#2{\displaystyle{\int_{#1}^{#2}}}
\def\bsum#1^#2{\displaystyle{\sum_{#1}^{#2}}}
\def\xdt_#1{X_#1(\Delta t)}
\newtheorem{THM}{Theorem}
\newtheorem{PROP}{Proposition}
\newtheorem{LEMME}{Lemma}
\newcommand \A[1]{{\bf (#1)}}
\begin{document}

\title{Stability of densities for perturbed degenerate Diffusions}
\thanks{The article was prepared within the framework of a subsidy granted to the HSE by the Government of the Russian Federation for the implementation of the Global Competitiveness Program.   \\ 
Support by Deutsche Forschungsgemeinschaft through the Research
Training Group RTG 1953 is gratefully acknowledged. }

\author{A. Kozhina}\address{Higher School of Economics, Shabolovka 31, Moscow, Russian Federation and RTG 1953, Institute of Applied Mathematics, Heidelberg University, Germany, aakozhina@hse.ru}

\date{\today} 
\begin{abstract} 
We study the sensitivity of the densities of some Kolmogorov like degenerate diffusion processes with respect to a perturbation of the coefficients of the non-degenerate component. 
 Under suitable (quite  sharp) assumptions  we quantify how the pertubation of the SDE affects the density. 
 Natural applications of these results appear in various fields from  mathematical finance to kinetic models. \\
Keywords: Diffusion Processes, Markov Chains, Parametrix, H\"older Coefficients, bounded drifts.
\end{abstract}

\maketitle

\section{Introduction}

We consider $\R^d \times \R^d-$valued processes that follow the dynamics:

\begin{align}
\begin{cases}
dX_t =  b(X_t,Y_t)dt+  \sigma(X_t, Y_t)dW_t, \\
dY_t = X_t dt,  t\in \lbrack 0,T], 
\end{cases}
\label{1} 
\end{align}
where  $b:  \mathbb{R}^{2d}\rightarrow \mathbb{R}^{d},\  \sigma: \mathbb{R}^{2d}\rightarrow \R^d\otimes \R^d$ are bounded coefficients that are   H\"older continuous in space (this condition will be possibly relaxed for the drift term $b$) and 
$W $ is a Brownian motion on some filtered probability space $(\Omega,\F,(\F_t)_{t\ge 0},\P) $. $T>0$ is a fixed deterministic final time. Also, $a(x,y):=\sigma\sigma^*(x,y)$ is assumed to be uniformly elliptic.
  
  We now introduce a perturbed version of \eqref{1} with dynamics:
\begin{align}
\begin{cases}
dX_t^{(\varepsilon)} = b_\varepsilon(X_t^{(\varepsilon)},Y_t^{(\varepsilon)})dt+  \sigma(X_t^{(\varepsilon)}, Y_t^{(\varepsilon)})dW_t, \\
dY_t^{(\varepsilon)} =  X_t^{(\varepsilon)} dt,
t\in \lbrack 0,T],  \label{2}
\end{cases}
\end{align}
where $b_\varepsilon: \mathbb{R}^{2d}\rightarrow \mathbb{R}^{d},\  \sigma_\varepsilon: \mathbb{R}^{2d}\rightarrow \R^d\otimes \R^d$ 
satisfy at least the same assumptions as $b,\sigma$ and are in some sense meant to be \textit{close} to $b,\sigma $ for small values of $\varepsilon>0$.

In particular those assumptions guarantee that \eqref{1}  admits a unique weak solution, see
e.g. \cite{meno:10}. The unique weak solution of \eqref{1} admits a density $p(t,(x,y),(x',y'))$  for all $t>0$ that satisfies the Aronson like bounds (see \cite{dela:meno:10} and \cite{meno:10}).

Such kind of processes as \eqref{1} appear in various applicative fields. For instance, in mathematical finance, when dealing
with Asian options, $X$ can be associated with the dynamics of the underlying asset and its integral $Y$ is involved in the option Payoff.
Typically, the price of such options writes $\E_x [\psi (X_T , T^{-1} Y_T )]$, where for the put (resp. call) option the function
$\psi(x, y) = (x - y)^+ $ (resp.$ (y - x)^+$), see \cite{baru:poli:vesp:01} and \cite{lape:pard:sent:98}.
It  is, thus, useful to specifically quantify how a perturbation of the coefficients impacts the option prices.

The cross dependence of the dynamics of $X$ in $Y$ is also important when handling kinematic models or Hamiltonian
systems. For a given Hamilton function of the form $ H (x, y) = V (y) + \frac{|x|^2}{2},$ where $V$ is a potential and $\frac{|x|^2}{2}$ the kinetic
energy of a particle with unit mass, the associated stochastic Hamiltonian system would correspond to $ b(X_s, Y_s) =-(\partial_y V (Y_s) + F (X_s, Y_s) X_s) $ in \eqref{1}, where $ F$ is a friction term. 
When $ F > 0$ natural questions arise concerning the
asymptotic behavior of $(X_t, Y_t),$ for instance, the geometric convergence to equilibrium for the Langevin equation is
discussed in Mattingly and Stuart \cite{matt:stua:high:02}, numerical approximations of the invariant measures in Talay \cite{tala:02}, the case
of high degree potential V is investigated in H\'erau and Nier \cite{hera:nier:04}.  


The goal of this work is to investigate how the closeness of $(b_\varepsilon,\sigma_\varepsilon)$ and $(b,\sigma)$ is reflected on the respective
densities of the associated processes. 
In many applications (misspecified volatility models or calibration procedures) it can be useful to know how the controls on the differences $|b- b_\varepsilon|, |\sigma - \sigma_\varepsilon|$ (for suitable norms) impact the
difference $p_\varepsilon -p $ of the densities corresponding respectively to the dynamics with the perturbed parameters and
the one of the model.

\subsection{Assumptions and Main Results.}

Let us introduce the following assumptions. Below, the parameter $\varepsilon>0$ is fixed and the constants appearing in the assumptions do not depend on $\varepsilon $.
\begin{trivlist}
\item[(\textbf{A1)}] \textbf{(Boundedness of the coefficients)}. The components of the vector-valued functions $b(x,y), b_\varepsilon(x,y)$ and the
matrix-functions $\sigma(x,y),\sigma_\varepsilon(x,y)$ are bounded measurable. Specifically, there exist constants $K_1,K_2>0$  s.t.
\begin{eqnarray*}
\sup_{(x,y) \in \R^{2d}}|b(x,y)|+\sup_{(x,y) \in \R^{2d}}|b_\varepsilon(x,y)|\le K_1,\\
\sup_{(x,y) \in \R^{2d}}|\sigma(x,y)|+\sup_{(x,y) \in  \R^{2d}}|\sigma_\varepsilon(x,y)|\le K_2.
\end{eqnarray*}

\item[(\textbf{A2})] \textbf{(Uniform Ellipticity)}. The matrices $a:=\sigma\sigma^*, a_\varepsilon:=\sigma_\varepsilon\sigma_\varepsilon^*$ are uniformly elliptic, i.e. there exists $\Lambda \ge 1,\ \forall (x,y,\xi)\in (\R^d)^3$,
\begin{eqnarray*}
\Lambda^{-1} |\xi|^2 
\le \langle a(x,y)\xi,\xi\rangle \le \Lambda |\xi|^2,
\Lambda^{-1} |\xi|^2 
\le \langle a_\varepsilon(x,y)\xi,\xi\rangle \le \Lambda |\xi|^2.
\end{eqnarray*}

\item[(\textbf{A3})] \textbf{(H\"older continuity in space)}.  For some $ \gamma \in (0,1]$ , $\kappa<\infty $,
\begin{eqnarray*}
\left\vert \sigma(x,y)-\sigma(x',y')\right\vert+\left\vert \sigma_{\varepsilon}(x,y)-\sigma_{\varepsilon}(x',y')\right \vert &\leq &
\kappa\left( \left\vert x-x'\right\vert + \left\vert y-y'\right\vert \right) ^{\gamma }.
\end{eqnarray*}
Observe that the last condition also readily gives, thanks to the boundedness of $\sigma,\sigma_\varepsilon $ that $a,a_\varepsilon $ are also uniformly $\gamma $-H\"older continuous.\\

\end{trivlist}

For a given $\varepsilon>0 $, we say that assumption \textbf{(A)} holds when conditions \textbf{(A1)}-\textbf{(A3)} are in force. Let us now introduce, under \textbf{(A)}, the quantities that will bound the difference of the densities in our main results below. Set for $\varepsilon>0$:
\begin{eqnarray*}
 \forall q\in (1,+\infty],\ 
  \Delta _{\varepsilon,b,q}:=\sup_{t\in[0,T]}\|b(t,.)-b_\varepsilon(t,.)\|_{L^q(\R^d)} .
\end{eqnarray*}
Since $\sigma,\sigma_\varepsilon$ are both $\gamma $-H\"older continuous, see (\textbf{A3}), we also define 
$$\Delta_{\varepsilon,\sigma,\gamma}:=|\sigma(.,.)-\sigma_{\varepsilon}(.,.)|_{\gamma}, $$
where for $\gamma\in (0,1] $, $|.|_\gamma $ stands for the usual H\"older norm in space on $C_b^\gamma(\R^d,\R^d~\otimes~\R^d) $ (space of H\"older continuous bounded functions, see e.g. Krylov \cite{kryl:96})  i.e. :
$$|f|_\gamma:=\sup_{x\in \R^d}|f(x)|+[f]_\gamma,\ [f]_\gamma:= \sup_{x\neq y,(x,y)\in \R^{2d}}\frac{|f(x)-f(y)|}{|x-y|^\gamma}.$$

 The previous control in particular implies for all $((x,y),(x',y'))\in (\R^{2d})^2 $:
$$|a(x,y)-a(x',y') - a_{\varepsilon}(x,y) + a_{\varepsilon}(x',y')| \le 2(K_2+\kappa)\Delta_{\varepsilon,\sigma,\gamma} \left( |x-x'|+|y-y'| \right) ^{\gamma}.$$

We eventually set $\forall q\in (1,+\infty],$
\begin{equation*}
\Delta_{\varepsilon,\gamma, q}:=\Delta_{\varepsilon,\sigma,\gamma}+\Delta_{\varepsilon,b,q},
\end{equation*}
which will be the key quantity governing the error in our results.

We will denote, from now on, by $C$ a constant depending on the parameters appearing in \textbf{(A)} and $T$. We reserve the notation $c$ for constants that only depend on \textbf{(A)} but not on $T$. The values of $C,c$ may change from line to line and do not depend on the considered pertubation parameter $\varepsilon $. 

\begin{THM}[Stability Control]
	\label{MTHM}
	Fix $T>0$. Under (\textbf{A}), for $q\in (4d,+\infty]$, there exists $C:=C(q)\ge 1,c\in (0,1]$ s.t. for all $0<t\le T, ((x,y),(x',y')) \in (\R^{2d})^2$:
	\begin{align*}
		\label{CTR_D}
	|(p-p_\varepsilon)(t,(x,y), (x',y'))| \le C \Delta_{\varepsilon, \gamma, q}    \hat p_{c,K}(t,(x,y),(x',y')),
	\end{align*}
	where $p(t,(x,y), (.,.)), p_\varepsilon(t,(x,y), (.,.)) $ respectively stand for the transition densities  at time $t$ of equations \eqref{1}, \eqref{2} starting from $ (x,y) $ at time $0$.
	Also, we denote for a given $c>0$ and for all $(x',y' )\in  \R^{2d}$, 
\begin{equation}
\label{KOLM_DENS}
\hat p_{c,K}(t,(x,y), (x',y')):=\frac{c^{d} 3^{d/2}}{(2\pi t^2)^{d}} \exp\left( -c \left[ \frac{|x'-x|^2}{4t}+
3 \frac{|y'-y-(x+x')t/2|^2}{t^3} \right] \right),
\end{equation}
which enjoys the semigroup property, i.e. $\forall 0 \le s<t \le T,$
\begin{align*}
\int_{\R^{2d}} \hat p_{c,K} (s,(x,y),(w,z)) \hat p_{c,K} (t-s,(w,z),(x',y')) dw dz = \hat p_{c,K} (t,(x,y),(x',y')).
\end{align*}
The subscript $K$ in the notation $\hat p_{c, K}$ stands for \textit{ Kolmogorov like equations} and $\hat p_{c, K}(t, (x,y),(\cdot , \cdot ))$ denotes the density of $$\left( X_t^{ c, K} \right):=\left( X_t^{K,c,1}, X_t^{K,c,2} \right) = \left(  x+ \frac{\sqrt{2}W_t}{c^{1/2}}, y+\int_0^t X_s ds\right). $$
We refer for details to the seminal paper \cite{kolm:33} and \cite{kona:meno:molc:10}, \cite{dela:meno:10} for further extensions.
\begin{remark}
\label{MULTI_S_REM}
Observe carefully that the density in \eqref{KOLM_DENS} exhibits a \textit{multiscale} behavior. The non degenerate component has the usual diffusive scale in $t^{1/2} $ corresponding to the self-similarity index or typical scale of the Brownian motion at time $t$, whereas the degenerate one has a faster typical behavior in $t^{3/2} $ corresponding to the typical scale of the integral $\int_0^tW_sds, $ 
associated with the parameters $y,y' $.

\end{remark}
\end{THM}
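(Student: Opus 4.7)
My approach will be based on the parametrix method adapted to the degenerate Kolmogorov setting (as developed by Delarue--Menozzi in \cite{dela:meno:10} and used in \cite{meno:10}). The first step is to write, for both $p$ and $p_\varepsilon$, the McKean--Singer type expansion
\begin{equation*}
p(t,(x,y),(x',y')) = \tilde p^{(x',y')}(t,(x,y),(x',y')) + \sum_{k\ge 1} \tilde p \otimes H^{\otimes k}(t,(x,y),(x',y')),
\end{equation*}
where $\tilde p^{(x',y')}$ denotes the Gaussian proxy obtained by freezing $\sigma$ at the terminal point $(x',y')$ and integrating along the deterministic transport associated with the drift, and $H = (L - \tilde L^{(x',y')})\tilde p^{(x',y')}$ is the parametrix kernel; the time-space convolution $\otimes$ is performed on $[0,t]\times \R^{2d}$. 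By \textbf{(A)} the proxies $\tilde p$, $\tilde p_\varepsilon$ are both pointwise controlled by $\hat p_{c,K}$ (with possibly adjusted $c$), and $H,H_\varepsilon$ obey the usual time-integrable singularity coming from the H\"older control on $a,a_\varepsilon$ and the boundedness of $b,b_\varepsilon$.

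Next I would write
\begin{equation*}
p - p_\varepsilon = (\tilde p - \tilde p_\varepsilon) + \sum_{k\ge 1}\Bigl( (\tilde p - \tilde p_\varepsilon)\otimes H^{\otimes k} + \tilde p_\varepsilon \otimes \bigl(H^{\otimes k} - H_\varepsilon^{\otimes k}\bigr)\Bigr),
\end{equation*}
and handle each piece separately. For the Gaussian proxies, direct differentiation in the coefficients and the Gaussian bound on $\hat p_{c,K}$ give an estimate of the form $|\tilde p - \tilde p_\varepsilon|(s,\cdot)\lesssim \Delta_{\varepsilon,\gamma,q}\, \hat p_{c,K}(s,\cdot)$ after integrating the drift contribution against the proxy (this is where we meet the $L^q$ assumption on $b-b_\varepsilon$). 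For the kernel differences one decomposes $H - H_\varepsilon = (L-L_\varepsilon)\tilde p_\varepsilon + (L-\tilde L)(\tilde p - \tilde p_\varepsilon) + \ldots$ and uses the H\"older control $\Delta_{\varepsilon,\sigma,\gamma}$ on $a-a_\varepsilon$ together with the cancellation techniques from Delarue--Menozzi that yield the integrable singularity in time. The semigroup property of $\hat p_{c,K}$ stated in the theorem makes it possible to iterate the convolutions and close the induction.

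The main obstacle is the treatment of the drift perturbation, which is only controlled in $L^q$ and not pointwise. Once one convolves $|b-b_\varepsilon|(w)\,\hat p_{c,K}(s,(x,y),(w,z))$ in the spatial variable, a H\"older inequality in $(w,z)$ forces us to estimate the $L^{q'}$ norm (with $q'$ the conjugate exponent) of $\hat p_{c,K}$. Because of the anisotropic Kolmogorov scaling highlighted in Remark~\ref{MULTI_S_REM} (diffusive scale $t^{1/2}$ for the non-degenerate component and $t^{3/2}$ for the degenerate one), the effective homogeneous dimension of the space-time metric equals $d + 3d = 4d$, and
\begin{equation*}
\|\hat p_{c,K}(s,(x,y),\cdot)\|_{L^{q'}(\R^{2d})} \;\asymp\; s^{-\frac{4d}{2}\left(1-\frac{1}{q'}\right)} = s^{-\frac{2d}{q}},
\end{equation*}
up to the prefactor governed by $\hat p_{c,K}$ itself. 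Integrability in $s$ on $[0,t]$ of the resulting singularity, after one also pays the $s^{-1/2}$ coming from differentiating the proxy in the non-degenerate variable, requires $\frac{2d}{q} + \frac12 < 1$, that is precisely $q > 4d$. This dimensional book-keeping is the sharp and delicate point.

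Once all individual terms are bounded by $C_k\Delta_{\varepsilon,\gamma,q}\,\hat p_{c,K}(t,(x,y),(x',y'))$ with $\sum_k C_k < \infty$ (the $C_k$ decay at a factorial-like rate thanks to the iterated time integrals of integrable power singularities, as in the standard parametrix), the semigroup property stated in the theorem collapses the space convolutions and yields the announced pointwise estimate. Summation of the series gives the final constant $C(q)$, depending on the parameters of \textbf{(A)}, on $T$ and on $q$ through the exponent gap $q-4d$; this completes the plan of proof.
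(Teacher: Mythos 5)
Your proposal follows essentially the same route as the paper: a parametrix expansion with a Kolmogorov-type Gaussian proxy frozen at the terminal point, a telescoping decomposition of $p-p_\varepsilon$ into proxy differences and iterated kernel differences, H\"older's inequality to absorb the $L^q$-controlled drift perturbation (yielding the $q>4d$ constraint from the anisotropic scaling, exactly as in the paper's one-step convolution Lemma~\ref{ONE_STEP_C}), and iteration via the semigroup property of $\hat p_{c,K}$ with factorial decay to sum the series. The paper phrases the iteration as an induction on the convolution index (Lemma~\ref{LEMME_DIFF_ITE}) rather than a single telescoping sum, but the two formulations are equivalent and the core estimates coincide.
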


\begin{remark}
Note that the same result could be achieved in the non-homogeneous case without additional assumptions (see \cite{dela:meno:10} for details).
\end{remark}

\begin{remark}
Observe as well that the control of Theorem \ref{MTHM}  should as well hold for the Euler schemes for the degenerate Kolmogorov SDEs introduced in \cite{lema:meno:10} associated with \eqref{1} and \eqref{2}  respectively. See also \cite{kona:kozh:men:15} for the sensitivity of perturbed Markov Chains in the non-degenerate case. 
\end{remark}

\begin{remark}
Let us mention that for applicative purposes, perturbations of the degenerate component could be very interesting as well. 
The first natural perturbation we have in mind would be to consider
$$dX_t^{\varepsilon,2}=\{X_t^{\varepsilon,1}+\varepsilon F(X_t^\varepsilon)\}$$
in \eqref{2} for a smooth bounded function $F$.
However, this setting would require a more subtle handling of the proxy processes involved, in order to make the parametrix approach work. In particular,  similar difficulties than those arising in \cite{dela:meno:10} would occur leading to truncate the parametrix series (because of the non-linear dynamics) and to control the reminders with stochastic control arguments. The investigation of such perturbations will concern further research.  
\end{remark}

The paper is organized as follows. We recall in Section \ref{PARAM_REPRESENT}  some basic facts about parametrix expansions for
the densities of degenerate diffusions. We then detail in Section \ref{STAB_DIFF} how to perform a stability analysis
of the parametrix expansions in order to derive the result of Theorem  \ref{MTHM}. 

\section{Parametrix Representation of the Density}
\label{PARAM_REPRESENT}
From \cite{meno:10} it follows that \eqref{1} has under \A{A} a unique weak solution. We aim at proving that the solution has for each $t\in (0,T]$ a density which can be represented as the sum of a parametrix series.

If the coefficients in \eqref{1} are not smooth, but satisfy \A{A}, it is then possible to use a mollification procedure, 
taking
 $b_\eta(x,y):=b\star \rho_{\eta}(x,y), \ \sigma_\eta(x,y):=\sigma~\star~\rho_{\eta}(x,y),\ x,y \in \R^d $ where $\rho_{\eta} $ is a smooth mollifying kernel and $\star$ stands for a standard convolution operation and $\eta\in[0,1]$, the case $\eta=0 $ corresponding to the initial process in \eqref{1}.

 For mollified coefficients, the existence and smoothness of the density $p_{\eta} $ for 
the associated process $(X^{\eta}_s,Y^{\eta}_s) $ follows from the H\"ormander theorem (see e.g. \cite{horm:67} or \cite{norr:86}). Thus,  we can apply the parametrix technique directly for  $p_{\eta}$.

Roughly speaking, the parametrix approach consists in approximating the process by a proxy which has a known density, here a Gaussian one, and then in investigating the difference through the Kolmogorov equations.  Various approaches to the parametrix expansion exist, see e.g. Il'in \textit{et al.}
\cite{ilin:kala:olei:62}, Friedman \cite{frie:64} and McKean and Singer \cite{mcke:sing:67}. The latter approach will be the one used in this work since it can  be directly extended to the discrete case for Markov chain approximations of equations \eqref{1} and \eqref{2}. Let us mention in this setting the works of Konakov \and Mammen, see\cite{kona:mamm:00}, 
 \cite{kona:mamm:02}.

For the parametrix development we need to introduce a “frozen” diffusion process $  (\tilde X_s^{\eta}, \tilde Y_s^{\eta})_{s \in [0,t] } $
. Namely
for fixed $(x',y') \in \R^{2d}, t \in (0,T]$ define for all $s\in [0,t] $:

\begin{align}
\begin{cases}
d \tilde X_s^{t,x',y', \eta} = \sigma_\eta (x',y'-x'(t-s) ) dW_s,   \ \ \tilde X_0^{t,x',y',\eta}=x,\\
d \tilde Y_s^{t,x',y', \eta} =\tilde  X_s^{t,x',y', \eta} ds, \ \   \tilde Y_0^{t,x',y', \eta}=y. 
\end{cases}
\label{1F} 
\end{align}

Observe that for $\eta \in [0,1]$ the above SDE integrates as 
\begin{eqnarray*}
(\tilde X^{t,x',y',\eta}_s, \tilde Y^{t,x',y',\eta}_s)  = R_s \begin{pmatrix} x \\ y \end{pmatrix} +\int_0^s R_{s-u} B \sigma_\eta(x',y'-x'(t-u))dW_u, 
\end{eqnarray*}
where
$R_s=\begin{pmatrix} 1 && 0 \\ s && 1 \end{pmatrix} , \ B=\begin{pmatrix} I_{d\times d} \\  0_{d \times d} \end{pmatrix},$
 which implies that $( \tilde X^{t,x',y',\eta}, \tilde Y^{t,x',y',\eta})$ is a Gaussian process. In particular, its density at time $t$ writes: 
\begin{eqnarray*}
\tilde p_{\eta}^{t, x',y'}(t, (x,y),(x',y'))& 
\\= \frac{1}{(2 \pi)^{d} {\rm det}(C_t)^{1/2}}& \exp \left( - \frac 12 \langle C_t^{-1} ( R_t \begin{pmatrix} x \\ y \end{pmatrix}-\begin{pmatrix} x' \\ y' \end{pmatrix}), R_t \begin{pmatrix} x \\ y \end{pmatrix}-\begin{pmatrix} x' \\ y' \end{pmatrix}\rangle \right),
\end{eqnarray*}
 where $C_t=\int_0^t R_{t-u} B \sigma_\eta \sigma^*_\eta (x',y'-x'(t-u)) B^* R^*_{t-u} du.$
 From this explicit expression, standard Gaussian like computations (see e.g. \cite{kona:meno:molc:10}) imply that there exits $C\ge 1,c\in (0,1]$ s.t.
 $$C^{-1} \hat p_{c^{-1},K}(t, (x,y),(x',y')) \le \tilde p_{\eta}^{t, x',y'}(t, (x,y),(x',y')) \le C \hat p_{c,K} (t, (x,y),(x',y')) .$$
 Also, we have the following controls of the derivatives 
\begin{eqnarray}
\label{DIF_CONTR}
 & \exists C>0,\  \forall \alpha=(\alpha_1,\alpha_2),  \ |\alpha|\le 4, \\
& |D^{\alpha_1}_x D^{\alpha_2}_y \tilde p_{\eta}^{t, x',y'}(t, (x,y),(x',y'))| \le \frac{C}{t^{|\alpha_1|/2 + 3|\alpha_2|/2 }} \hat p_{c, K}(t, (x,y),(x',y')). \notag
\end{eqnarray}
Observe that these controls also reflect the multi scale behavior already mentioned in Remark \ref{MULTI_S_REM}. They are also uniform w.r.t. $\eta \in [0,1] $.

\begin{remark}
The arguments in the second variable of the diffusion coefficient
can seem awkward at first sight,they actually correspond to the transport of the frozen final point $(x',y')$ by the backward differential system:
$
\begin{pmatrix}
 \dot x_s=0 \\ \dot y_s=x_s 
\end{pmatrix},
\begin{pmatrix}
 x_t=x' \\ y_t=y'
\end{pmatrix}.
$
This choice is performed to have a \textit{''compatibility''}  condition in the difference of generators in the parametrix expansion.
See the controls on $H^\eta$ established in \eqref{KER_CONTR} below.
\end{remark}

The processes $(X_s^{\eta},Y_s^{\eta})$  and $(\tilde X_s^{t,x',y', \eta}, \tilde Y_s^{t,x',y', \eta}), \ s \in [0,t],$   have the following generators: $\forall (x,y) \in \R^{2d}, \psi \in C^2(\R^{2d}),$

\begin{equation}
\begin{split}
L^{\eta} \psi(x,y)= \left( \frac{1}{2} Tr \left( a_{\eta}(x,y) D_{x}^{2}\psi \right) + \langle b_{\eta}(x,y), \nabla_x \psi \rangle +\langle x, \nabla_y \psi \rangle \right) (x,y), \\
\tilde L_s^{t,x',y',\eta} \psi (x,y)= \left( \frac{1}{2} Tr \left( a_{\eta}(x',y'-x'(t-s) ) D_x^{2} \psi \right)  + \langle x, \nabla_y \psi \rangle \right) (x,y).
\end{split}
\label{Generators}
\end{equation}

Let us define for notational convinience $\tilde p_\eta (t,(x,y), (x',y')) := \tilde  p_\eta^{t, x',y'}(t,(x,y), (x',y')),$ that is in $ \tilde p_\eta(t,(x,y), (x',y'))$ we consider
the density of the frozen process at the final point and observe it at that specific point.

The density $\tilde p_{\eta} $ readily satisfies the  Kolmogorov Backward equation:
\begin{eqnarray}
\label{KOLM_GEL}
\begin{cases}
\partial_u \tilde p_{\eta}(t-u,(x,y),(x',y'))+\tilde L_u^{t,x',y',\eta} \tilde p_{\eta} (t-u,(x,y),(x',y'))=0,\\ 
	0 < u<t, (x,y),(x',y') \in \R^{2d},\\
\tilde p_{\eta}(t-u,(\cdot, \cdot),(x',y')) \underset{t-u\downarrow 0}{\rightarrow} \delta_{(x',y')}(.).
 \end{cases}
\end{eqnarray}

On the other hand, since  the density of $(X^{\eta}_s,Y^{\eta}_s)$ is smooth, it must satisfy the Kolmogorov forward equation (see e.g. Dynkin \cite{dynk:65}). For a given starting point $(x,y)\in \R^{2d}$ at time $0$,
\begin{eqnarray}
\begin{cases}
\partial_u  p_\eta (u,(x,y),(x',y'))-L^{\eta * } p_\eta (u,(x,y),(x',y'))=0,\ 0< u\le t, (x,y) \in \R^{2d},\\
 p_\eta (u,(x,y),.) \underset{u\downarrow 0}{\rightarrow} \delta_{(x,y)}(.),
 \end{cases} \notag \\
\label{KOLM}
\end{eqnarray}

where $L^{\eta*} $ stands for the adjoint (which is  well defined since the coefficients are smooth) of the generator $L^{\eta}$ in \eqref{Generators}.

Equations \eqref{KOLM_GEL} and \eqref{KOLM} yield the formal expansion below:

\begin{eqnarray}
\label{EQ_PARAM}
(p_{\eta}-\tilde p_{\eta})(t,(x,y),(x',y'))\\
=\int_{0}^{t} du \partial_u \int_{\R^{2d}} dz dw p_{\eta}(u,(x,y),(w,z)) \tilde p_{\eta}(t-u,(w,z),(x',y'))\nonumber\\
=\int_{0}^{t} du \int_{\R^{2d}}dz dw \bigg( \partial_u p_{\eta}(u,(x,y),(w,z)) \tilde p_{\eta}(t-u,(w,z),(x',y')) \nonumber \\
+  p_{\eta}(u,(x,y),(w,z)) \partial_u \tilde p_{\eta}(t-u,(w,z),(x',y')) \bigg)\nonumber \nonumber \\
=\int_{0}^{t} du \int_{\R^{2d}}dz dw \bigg( L^{\eta*}  p_{\eta}(u,(x,y),(w,z)) \tilde p_{\eta}(t-u,(w,z),(x',y')) \nonumber \\
-  p_{\eta}(u,(x,y),(w,z)) \tilde L^{\eta} \tilde p_{\eta}(t-u,(w,z),(x',y')) \bigg)\nonumber \\
=\int_{0}^{t} du \int_{\R^{2d}} dz dw  p_{\eta}(u,(x,y),(w,z))(L^{\eta} -\tilde L^{\eta} )\tilde p_{\eta}(t-u,(w,z),(x',y')),  \notag
\end{eqnarray}

using the Dirac convergence for the first equality, equations \eqref{KOLM_GEL} and \eqref{KOLM} for the third one. We eventually take the adjoint for the last equality. Note carefully that the differentiation under the integral is also here formal since we would need to justify that it can actually be performed using some growth properties of the density and its derivatives which we \textit{a priori} do not know.

Let us now introduce the notation  
$$f\otimes g (t,(x,y),(x',y'))=\int_{0}^{t} du \int_{\R^{2d}} dz dw f(u,(x,y),(w,z)) g(t-u,(w,z),(x',y')) $$ 
for the time-space convolution  We now introduce the \textit{parametrix} kernel:
\begin{equation*}
H^{\eta}(t,(x,y),(x',y')):=(L^{\eta}-\tilde L^{\eta})\tilde p_{\eta}(t,(x,y),(x',y')).
\end{equation*}

\begin{remark}
Note carefully that in the above kernel $H^{\eta}$, because of the linear structure of the degenerate component in the model, the most singular terms, i.e. those involving derivatives w.r.t. $y$, i.e. the \textit{fast} variable, vanish (see Remark \ref{MULTI_S_REM} and \eqref{DIF_CONTR}).
\end{remark}

With those notations equation \eqref{EQ_PARAM} rewrites:
 \begin{eqnarray*}
 (p_{\eta}-\tilde p_{\eta})(t,(x,y),(x',y'))=p_{\eta}\otimes H^{\eta} (t,(x,y),(x',y'))\\
 =\int_0^t du\int_{\R^{2d}} p_\eta(u,(x,y),(w,z))H^\eta(t-u,(w,z),(x',y'))dwdz.
 \end{eqnarray*}

From this expression, the idea then consists in iterating this procedure for $p_{\eta}(u,(x,y),(w,z)) $
in \eqref{EQ_PARAM} introducing the density of a process with frozen characteristics in $(w,z)$ which is here the integration variable. This yields to iterated convolutions of the kernel and leads to the formal expansion:
 \begin{eqnarray}
 \label{DEV_SER}
p_{\eta}(t,(x,y),(x',y'))=\bsum{r=0}^{\infty} \tilde p_{\eta} \otimes H^{{\eta}, (r)}(t,(x,y),(x',y')),
\end{eqnarray}

where $\tilde p_{\eta}\otimes H^{{\eta},(0)}=\tilde p_{\eta}, H^{{\eta},(r)}=H^{\eta}\otimes H^{{\eta},(r-1)},\ r\ge 1$. 

Obtaining estimates on $p_{\eta}$ from the formal expression \eqref{DEV_SER} requires to have good controls on the right-hand side. 

  Precisely thanks to \eqref{DIF_CONTR},  we first get that uniformly in $\eta \in [0,1]$ (thanks to \A{A} and the specific choice of the \textit{freezing} parameters in the proxy), there exist $ c_1>1, \ c \in(0,1]$ s.t. for all $u \in [0,t),$
 \begin{eqnarray}
\label{KER_CONTR}
\left \vert H^\eta(t-u,(w,z),(x',y'))\right \vert  \notag \\
\le \frac{1}{2}Tr \left\{ a_\eta(w,z)-a_\eta(x',y'-x'(t-u)) \right\} D_w^2 \tilde p_\eta (t-u, (w,z),(x',y')) \notag \\
+\langle b_\eta(w,z), D_w \tilde p_\eta(t-u,(w,z),(x',y')) \rangle \notag \\
\le \left[ \frac{C |w-x'|^{\gamma}+|z-y'-x'(t-u)|^{\gamma}}{2(t-u)}+\frac{C K_1}{(t-u)^{1/2}} \right] \hat p_{c,K}(t-u,(w,z),(x',y')) \notag \\
 \le c_1  \left( 1 \vee T^{(1-\gamma) /2} \right) \frac{\hat p_{c,K}(t-u,(w,z),(x',y')) }{(t-u)^{1-\gamma/2}}.
\end{eqnarray}

We can establish by induction the following key result.

\begin{LEMME}
\label{Lemma_bound}
There exist constants $C\ge 1, c\in(0,1] $ s.t. for all $\eta\in [0,1] $ one has for all $(t,(x,y),(x',y'))\in (0,T]\times (\R^{2d})^2 $:
\begin{eqnarray*}
\left \vert \tilde p_\eta \otimes H^{\eta,(r)}(t,(x,y),(x',y')) \right \vert \\
\le C^{r+1} t^{r \gamma/2} B\left ( 1, \frac{\gamma}{2} \right) \times B\left ( 1+\frac{\gamma}{2} , \frac{\gamma}{2} \right) \times \dots \times
B\left ( 1+\frac{(r-1)\gamma}{2}, \frac{\gamma}{2} \right) \notag \\ \times  \hat p_{c, K}(t, (x,y), (x',y')),  \ \ r\in \N^{*}. \notag
\end{eqnarray*}
\end{LEMME}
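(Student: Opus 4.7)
The plan is to proceed by induction on $r\in \mathbb{N}^{*}$, relying on three ingredients already available in the excerpt: the Gaussian upper bound $\tilde p_\eta(t,(x,y),(x',y'))\le C\,\hat p_{c,K}(t,(x,y),(x',y'))$ established right after \eqref{DIF_CONTR}; the singular kernel estimate \eqref{KER_CONTR}, namely $|H^\eta(t-u,(w,z),(x',y'))|\le c_1(1\vee T^{(1-\gamma)/2})(t-u)^{-1+\gamma/2}\,\hat p_{c,K}(t-u,(w,z),(x',y'))$; and the semigroup identity for $\hat p_{c,K}$ stated in Theorem \ref{MTHM}. The last point is the structural engine of the induction: every spatial convolution collapses to a single $\hat p_{c,K}(t,(x,y),(x',y'))$ factor, so the whole argument reduces to computing one-dimensional Beta-type time integrals.

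For the base case $r=1$, I insert the two pointwise bounds into the definition of $\tilde p_\eta\otimes H^\eta$, apply the semigroup identity in the spatial variables, and use
\[
\int_0^t (t-u)^{-1+\gamma/2}\,du \;=\; \tfrac{2}{\gamma}\,t^{\gamma/2} \;=\; t^{\gamma/2}\,B\!\left(1,\tfrac{\gamma}{2}\right),
\]
which produces the bound with prefactor $C\cdot c_1(1\vee T^{(1-\gamma)/2})\cdot t^{\gamma/2}\,B(1,\gamma/2)\,\hat p_{c,K}$. Choosing the generic constant $C$ large enough at the outset to absorb $c_1(1\vee T^{(1-\gamma)/2})$ rewrites this as $C^2\,t^{\gamma/2}\,B(1,\gamma/2)\,\hat p_{c,K}$. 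For the inductive step, I use associativity of time-space convolution to write $\tilde p_\eta\otimes H^{\eta,(r+1)} = (\tilde p_\eta\otimes H^{\eta,(r)})\otimes H^\eta$, substitute the inductive bound together with \eqref{KER_CONTR}, collapse the spatial integral via the semigroup property, and evaluate the remaining time factor
\[
\int_0^t u^{r\gamma/2}(t-u)^{-1+\gamma/2}\,du \;=\; t^{(r+1)\gamma/2}\,B\!\left(1+\tfrac{r\gamma}{2},\tfrac{\gamma}{2}\right),
\]
which is precisely the next Beta factor required by the statement. The constant absorption again propagates $C^{r+1}\mapsto C^{r+2}$, closing the induction.

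The main point requiring care is not the induction itself but the bookkeeping of constants: fixing $C$ once and for all, depending only on $T$ and the parameters in \textbf{(A)}, so that $C\ge c_1(1\vee T^{(1-\gamma)/2})$ and the recursion is self-reproducing at each step. Everything else, namely associativity of the time-space convolution, the semigroup identity, and the Beta computation, is routine once \eqref{KER_CONTR} and the Gaussian domination of $\tilde p_\eta$ are available. It is worth emphasizing that the $\gamma/2$ Hölder gain per iteration (rather than $\gamma$) is the signature of the multiscale degenerate setting: since no derivative with respect to the fast variable $y$ survives in $H^\eta$ (see the remark following the definition of $H^\eta$ and the scaling in \eqref{DIF_CONTR}), the resulting time singularity $(t-u)^{-1+\gamma/2}$ is integrable and yields convergent Beta factors, which is exactly what makes the parametrix series useful downstream.
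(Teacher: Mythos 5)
Your proposal is correct and follows essentially the same route as the paper: bound $\tilde p_\eta$ by $\hat p_{c,K}$, use the kernel estimate \eqref{KER_CONTR}, collapse the spatial integral via the semigroup property of $\hat p_{c,K}$, evaluate the remaining time integral as a Beta function, and close the induction using associativity of $\otimes$ (so that $\tilde p_\eta\otimes H^{\eta,(r+1)}=(\tilde p_\eta\otimes H^{\eta,(r)})\otimes H^\eta$). The paper does exactly this, merely stating the base case in detail and then writing ``by induction in $r$''; your version spells out the inductive step and the constant bookkeeping, but there is no substantive difference.
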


\begin{proof}

The result \eqref{DIF_CONTR} in particular yields that $\exists C_2>0, \forall u \in (0,t],$ $ \tilde p_\eta(t-u,(x,y),(w,z)) \le C_2 \hat p_{c, K} (t-u,(x,y),(w,z))$ uniformly w.r.t. $\eta \in [0,1].$ \\
Setting $C:=c_1 \left(1 \vee T^{(1-\gamma)/2}\right)  \vee C_2, $ we finally obtain also uniformly in $\eta$
\begin{eqnarray*}
\left \vert \tilde p_\eta \otimes H^\eta(t,(x,y),(x',y')) \right \vert \\
\le \int_0^t du \int_{\R^{2d}} \tilde p_\eta (u,(x,y),(w,z)) |H^\eta(t-u,(w,z),(x',y')) | dwdz, \\
\le \int_0^t du \int_{\R^{2d}} C^2 \hat p_{c,K}(u, (x,y),(w,z)) \frac{1}{(t-u)^{1- \gamma/2}} \hat p_{c,K}(t-u, (w,z),(x',y'))dwdz \\
\le C^2 t^{\gamma/2} B\left ( 1, \frac{\gamma}{2} \right) \hat p_{c, K}(t, (x,y), (x',y')),
\end{eqnarray*}
using the semigroup property of $\hat p_{c,K}$ in the last inequality and where $B(p,q) = \int_0^1 u^{p-1} (1-u)^{q-1}du$ denotes the $\beta-$function. 
By induction in $r$:
 \begin{eqnarray*}
\left \vert \tilde p_\eta \otimes H^{\eta,(r)}(t,(x,y),(x',y')) \right \vert \\
\le C^{r+1} t^{r \gamma/2} B\left ( 1, \frac{\gamma}{2} \right) \times B\left ( 1+\frac{\gamma}{2} , \frac{\gamma}{2} \right) \times \dots \times
B\left ( 1+\frac{(r-1)\gamma}{2}, \frac{\gamma}{2} \right) \notag \\ \times \hat p_{c, K}(t-s, (x,y), (x',y')),  \ \ r\in \N^{*}, \notag
\end{eqnarray*}
which means that the sum of the series \eqref{DEV_SER} is uniformly controlled w.r.t. $\eta \in [0,1].$
 
\end{proof}

These bounds imply that the series representing the  density of the initial process   $p_\eta(t,(x,y),(x',y'))$ could be expressed as:
\begin{eqnarray}
p_\eta(t,(x,y),(x',y')) = \sum_{r=0}^{\infty} \tilde p_\eta \otimes H^{(r)} (t,(x,y),(x',y')). 
\label{PARAM_ETA}
\end{eqnarray}

Lemma \ref{Lemma_bound} readily yields the convergence of the series \eqref{PARAM_ETA} and the following bound uniformly in $\eta \in [0,1]$:
$p_{\eta} (t, (x,y),(x',y')) \le c_1 \hat p_{c, K}(t,(x,y),(x',y')).$ 

From the bounded convergence theorem one can derive that
\begin{eqnarray}
p_\eta(t, (x,y),(x',y')) \underset{\eta \rightarrow 0}{\longrightarrow}  \sum_{r=0}^{\infty} \tilde p \otimes H^{(r)} (t,(x,y),(x',y')):= p(t, (x,y),(x',y')),\notag \\ \label{SUM} 
\end{eqnarray}
 uniformly in $(t,(x,y),(x',y')),$
where $\tilde p(u,(x,y),(w,z)):=\tilde p_0 (u,(x,y),(w,z)) $ and $H^{(r)}(t-u,(w,z),(x',y')):=H^{0,(r)}(t-u,(w,z),(x',y')).$ 

Due to the  uniform convergence in $\eta$ (which implies the uniqueness in law):

\begin{eqnarray*}
\int_{\R^{2d}} f(z,w)p_\eta(t,(x,y),(w,z))dw dz \underset{\eta \rightarrow 0}{\longrightarrow} \int_{\R^{2d}} f(z,w) p(t,(x,y),(w,z))dw dz,
\end{eqnarray*}
for all continious and bounded $f$. The well-posedness of the martingale problem and Theorem 11.1.4 from \cite{stro:vara:79} then give that the process $(X_t,Y_t) $ has the transition density which is exactly the sum of the parametrix series $p(t, (x,y),(x',y')). $ 

Thus, we have proved the below proposition.

\begin{PROP}
\label{EX_DENS_DEG}
Under the sole assumption \A{A},  for $t>0 $, the transition density of the process $(X_t, Y_t)$ solving \eqref{1}
exists and can be written as the series in \eqref{SUM}. 
\end{PROP}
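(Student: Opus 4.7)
The plan is to bypass the lack of regularity of the coefficients by a standard mollification argument and then pass to the limit in the parametrix series. First, I would work with the smoothed coefficients $b_\eta := b\star \rho_\eta$, $\sigma_\eta := \sigma\star\rho_\eta$ for $\eta\in (0,1]$. On this level the coefficients are $C^\infty_b$ and uniformly non-degenerate with the same ellipticity constants as in \A{A}, so H\"ormander's theorem applies to the degenerate Kolmogorov system and yields a smooth transition density $p_\eta(t,(x,y),(x',y'))$. Smoothness justifies rigorously the integration by parts and the differentiation under the integral leading from \eqref{KOLM_GEL}--\eqref{KOLM} to the identity \eqref{EQ_PARAM}, and thence by iteration to the convergent parametrix representation \eqref{PARAM_ETA}.

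Next, I would use Lemma \ref{Lemma_bound} to obtain bounds on each term $\tilde p_\eta\otimes H^{\eta,(r)}$ of the series that are uniform in $\eta\in [0,1]$. Since the product of Beta functions appearing there is summable (the series is of Mittag-Leffler type in $t^{\gamma/2}$), this yields the uniform Gaussian-type bound $p_\eta(t,(x,y),(x',y'))\le c_1\hat p_{c,K}(t,(x,y),(x',y'))$ and, more importantly, that the tails of the series are uniformly small in $\eta$. Dominated convergence applied termwise (noting that $\tilde p_\eta\to \tilde p_0$ and $H^\eta\to H^0$ pointwise by continuity of the frozen coefficients under convolution with $\rho_\eta$) then gives the existence of the limit
\begin{equation*}
p(t,(x,y),(x',y')):=\sum_{r=0}^\infty \tilde p\otimes H^{(r)}(t,(x,y),(x',y')),
\end{equation*}
uniformly in $(t,(x,y),(x',y'))$ on compacts of $(0,T]\times (\R^{2d})^2$, as stated in \eqref{SUM}.

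The last step is to identify this limit with the transition kernel of the SDE \eqref{1}. By \A{A} and \cite{meno:10}, the martingale problem associated with $L$ is well posed, so the laws of $(X_t^\eta,Y_t^\eta)$ converge weakly to the law of $(X_t,Y_t)$. The uniform convergence of $p_\eta$ against continuous bounded test functions $f$ gives $\E f(X_t,Y_t)=\int f(x',y')p(t,(x,y),(x',y'))dx'dy'$, and invoking Theorem 11.1.4 of \cite{stro:vara:79} allows one to conclude that $p(t,(x,y),\cdot)$ is precisely the transition density of the unique weak solution of \eqref{1}.

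The main obstacle, which is essentially the reason the mollification is introduced in the first place, is that the formal differentiation under the integral in \eqref{EQ_PARAM} requires a priori growth controls on $p$ and its derivatives that are not available under the sole H\"older regularity of the coefficients; this is what forces one to work at the mollified level where H\"ormander's theorem supplies the required smoothness. A secondary technical point is to ensure that the Gaussian upper bounds, the controls \eqref{DIF_CONTR} on derivatives of $\tilde p_\eta$, and the kernel estimate \eqref{KER_CONTR} are genuinely uniform in $\eta\in[0,1]$, but this follows from the fact that the constants depend only on $\Lambda$, $K_1$, $K_2$, $\kappa$ and $\gamma$, which are all preserved by mollification.
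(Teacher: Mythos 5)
Your proposal follows the same route as the paper: mollify the coefficients, invoke H\"ormander's theorem for the smoothed degenerate system to obtain a smooth density $p_\eta$, rigorously derive the parametrix series for $p_\eta$, use the uniform-in-$\eta$ Gaussian bounds from Lemma \ref{Lemma_bound} to pass to the limit $\eta\to 0$ by bounded convergence, and identify the limit series with the transition density of \eqref{1} via well-posedness of the martingale problem and Theorem 11.1.4 of \cite{stro:vara:79}. The argument is correct, and your remark that the constants in (A) are preserved under mollification is precisely the reason the kernel estimates and hence the series bounds hold uniformly in $\eta$.
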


\section{Stability}
\label{STAB_DIFF}
We will now investigate more specifically the sensitivity of the density w.r.t. the coeffcients perturbation through the
difference of the series. From Proposition \ref{EX_DENS_DEG} , for a given fixed parameter  $\varepsilon$, under (A) the densities $p(t, (x,y), (\cdot, \cdot) ),  p_{\varepsilon}(t,(x,y), (\cdot, \cdot))$ at time $t$
of the processes in \eqref{1}, \eqref{2} starting from $(x,y)$ at time $0$ both admit a parametrix expansion of the previous type.

Let us consider the difference between  the two parametrix expansions for \eqref{1} and \eqref{2} in the form \eqref{DEV_SER}:
\begin{eqnarray*}
|p(t, (x,y),(x',y'))-p_\varepsilon(t, (x,y),(x',y'))| \\
\le \sum_{r=0}^{+\infty} | \tilde p \otimes H^{(r)}(t, (x,y),(x',y')) -
\tilde p_{\varepsilon} \otimes H_{\varepsilon}^{(r)}(t, (x,y),(x',y'))|. 
\end{eqnarray*}

Since we consider perturbations of the densities with respect to the non-degenerate component, following the same steps as in \cite{kona:kozh:men:15} one can show that the Lemma below holds:
\begin{LEMME}[Difference of the first terms and their derivatives]
\label{LEM_MT}
There exist $c_1\ge 1,\ c\in (0,1]$ s.t. for all $0<t,\  (x,y),(x',y')\in \R^{2d}$ and all multi-index $\alpha,\ |\alpha|\le 4 $,  
$$|D_{x}^\alpha \tilde p(t,(x,y),(x',y'))-D_{x}^\alpha\tilde p_\varepsilon(t,(x,y),(x',y')) |\le  \frac{c_1 \Delta_{\varepsilon, \sigma, \gamma} \hat p_{c,K}(t,(x,y),(x',y'))}{t^{|\alpha|/2} }. $$ 
\end{LEMME}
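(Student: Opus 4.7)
The key observation is that $\tilde p$ and $\tilde p_\varepsilon$ are explicit Gaussian densities with identical linear mean and differing only through the frozen diffusion coefficient, namely via the covariances
\[ C_t = \int_0^t R_{t-u} B\, a(x',y'-x'(t-u))\, B^* R_{t-u}^* du,\quad C_{t,\varepsilon}= \int_0^t R_{t-u} B\, a_\varepsilon(x',y'-x'(t-u))\, B^* R_{t-u}^* du. \]
My plan is a linear interpolation on the covariance. For $\lambda\in[0,1]$ set $C_{t,\lambda}:=(1-\lambda)C_t+\lambda C_{t,\varepsilon}$ and let $\tilde p_\lambda$ be the Gaussian density with covariance $C_{t,\lambda}$ and the same mean. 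Since $a$ and $a_\varepsilon$ both satisfy \textbf{(A2)} with the same $\Lambda$, $C_{t,\lambda}$ inherits the Kolmogorov block scaling with constants depending only on $\Lambda$, so that $\tilde p_\lambda \le C \hat p_{c,K}$ and the analogues of \eqref{DIF_CONTR} hold uniformly in $\lambda\in[0,1]$.

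Then I would write $\tilde p_\varepsilon - \tilde p = \int_0^1 \partial_\lambda \tilde p_\lambda\, d\lambda$ and use the classical Gaussian identity $\partial_{\Sigma_{ij}} p(z;\Sigma) = \tfrac12\, \partial^2_{z_i z_j} p(z;\Sigma)$, with $z$ the displacement from the (common) mean, to rewrite the integrand as
\[ \partial_\lambda \tilde p_\lambda = \tfrac12\, \Tr\bigl((C_{t,\varepsilon}-C_t)\, D^2_z \tilde p_\lambda\bigr). \]
From $|a-a_\varepsilon|_\infty \le 2 K_2\, \Delta_{\varepsilon,\sigma,\gamma}$ and the above integral representations, the blocks of $C_{t,\varepsilon}-C_t$ are bounded respectively by $Ct\,\Delta_{\varepsilon,\sigma,\gamma}$, $Ct^2\,\Delta_{\varepsilon,\sigma,\gamma}$, $Ct^3\,\Delta_{\varepsilon,\sigma,\gamma}$, matching the Kolmogorov time-scales; by a direct Gaussian computation the blocks of $D^2_z \tilde p_\lambda$ are bounded, dually, by $Ct^{-1}\hat p_{c,K}$, $Ct^{-2}\hat p_{c,K}$, $Ct^{-3}\hat p_{c,K}$. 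Multiplying block by block gives $|\partial_\lambda \tilde p_\lambda|\le C\,\Delta_{\varepsilon,\sigma,\gamma}\,\hat p_{c,K}$ uniformly in $\lambda$, and integration in $\lambda$ proves the case $\alpha=0$.

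For $|\alpha|\ge 1$, I would differentiate under the $d\lambda$-integral sign and apply $D_x^\alpha$ to $\partial_\lambda \tilde p_\lambda$. Because $z$ depends linearly on $x$ through $R_t$, each $\partial_x$ rewrites as a combination of a $\partial_{z^x}$ and a $t\,\partial_{z^y}$; both bring the same extra factor $t^{-1/2}$ in the Gaussian bounds, thanks to the self-similar scales $t^{1/2}$ and $t^{3/2}$ of the two blocks. Iterating $|\alpha|$ times yields exactly the factor $t^{-|\alpha|/2}$ without affecting the $\Delta_{\varepsilon,\sigma,\gamma}$ prefactor.

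The main obstacle will be to justify the uniform-in-$\lambda$ Gaussian off-diagonal bounds, namely that $C_{t,\lambda}^{-1}$ preserves the block scaling $\operatorname{diag}(t^{-1}I_d,\, t^{-3}I_d)$ with constants depending only on $\Lambda$. This follows from the convexity of uniform ellipticity combined with the standard diagonalization of $C_{t,\lambda}$ by the scale matrix $\operatorname{diag}(I_d,\, tI_d)$, as in \cite{kona:meno:molc:10}. The rest of the argument is then the Gaussian-only analogue of the computation in the proof of Lemma 3.1 of \cite{kona:kozh:men:15}.
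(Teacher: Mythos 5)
Your proposal is correct, and it is worth noting that the paper itself offers no proof of Lemma~\ref{LEM_MT}: it merely invokes the non-degenerate analogue in \cite{kona:kozh:men:15} and states that the same steps apply to the non-degenerate component. Your argument fills in that gap explicitly for the Kolmogorov setting, and it does so along what is almost certainly the intended route: $\tilde p$ and $\tilde p_\varepsilon$ share the same affine mean $R_t(x,y)^*-(x',y')^*$ and differ only through the covariance, so a linear interpolation $C_{t,\lambda}=(1-\lambda)C_t+\lambda C_{t,\varepsilon}$ together with the Gaussian identity $\partial_{\Sigma_{ij}}p=\tfrac12\partial^2_{z_iz_j}p$ reduces the difference to $\int_0^1\tfrac12\Tr\bigl((C_{t,\varepsilon}-C_t)D_z^2\tilde p_\lambda\bigr)\,d\lambda$. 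Two points that you flag and that really do need care check out: (i) $C_{t,\lambda}$ is the covariance built from $(1-\lambda)a+\lambda a_\varepsilon$, which remains uniformly elliptic with the same $\Lambda$ under \textbf{(A2)}, so the scale matrix $\mathbb{T}_t=\mathrm{diag}(t^{1/2}I_d,t^{3/2}I_d)$ diagonalises $C_{t,\lambda}$ with constants uniform in $\lambda$, giving the uniform Aronson-type bound on $\tilde p_\lambda$ and its derivatives; (ii) the block-by-block pairing of $C_{t,\varepsilon}-C_t$ (blocks $\asymp t\Delta,\;t^2\Delta,\;t^3\Delta$ via $|a-a_\varepsilon|_\infty\le 2K_2\Delta_{\varepsilon,\sigma,\gamma}$) against $D_z^2\tilde p_\lambda$ (blocks $\asymp t^{-1},t^{-2},t^{-3}$ times $\hat p_{c,K}$) cancels all powers of $t$, exactly as needed for $\alpha=0$. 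For $|\alpha|\ge1$, the decomposition $\partial_x=\partial_{z^x}+t\,\partial_{z^y}$ coming from $z=(x-x',\,y+tx-y')$ is the right observation: both terms cost $t^{-1/2}$ (the first directly, the second as $t\cdot t^{-3/2}$), consistently with \eqref{DIF_CONTR}. In short, your proof is a correct, self-contained replacement for a citation the paper leaves opaque, and it correctly tracks the multiscale structure of the degenerate proxy.
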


\begin{LEMME}[Control of the one-step convolution]
\label{ONE_STEP_C}
 For all $0<t,\  (x,y),(x',y')\in \R^{2d}$:
\begin{equation}
\begin{split}
| \tilde p \otimes H^{(1)}(t,(x,y),(x',y'))-\tilde p_\varepsilon \otimes H^{(1)}_\varepsilon(t,(x,y),(x',y')) |  \\
\le  c_1^2  \Big\{ (1\vee T^{(1-\gamma)/2})^{2} [\Delta_{\varepsilon,\sigma,\gamma} +\I_{q=+\infty}\Delta_{\varepsilon,b,+\infty}] B(1, \frac{\gamma}{2})t^{\frac{\gamma}{2}}
 \\+ \I_{q\in (4d,+\infty)}\Delta_{\varepsilon,b,q}B(\frac12+\alpha(q),\alpha(q))t^{\alpha(q)}\Big\}\hat p_{c,K}(t,(x,y),(x',y'))
 \label{esti2},
  \end{split}
\end{equation}
where $c_1,c$ are as in Lemma \ref{LEM_MT} and for $q\in (4d,+\infty) $ we set $\alpha(q)=\frac{1}{2}-\frac{2d}{q}.$
\end{LEMME}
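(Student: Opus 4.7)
The plan is to split
\begin{equation*}
\tilde p \otimes H - \tilde p_\varepsilon \otimes H_\varepsilon = (\tilde p - \tilde p_\varepsilon)\otimes H + \tilde p_\varepsilon \otimes (H - H_\varepsilon) =: D_1 + D_2,
\end{equation*}
and treat $D_1$ and $D_2$ separately. For $D_1$, Lemma \ref{LEM_MT} with $|\alpha|=0$ bounds $|\tilde p - \tilde p_\varepsilon|$ by $c_1\Delta_{\varepsilon,\sigma,\gamma}\hat p_{c,K}$, the estimate \eqref{KER_CONTR} bounds $|H|$ by $c_1(1\vee T^{(1-\gamma)/2})(t-u)^{-1+\gamma/2}\hat p_{c,K}$, and the semigroup property of $\hat p_{c,K}$ yields
\begin{equation*}
|D_1|\le c_1^2 (1\vee T^{(1-\gamma)/2})\,\Delta_{\varepsilon,\sigma,\gamma}\,B(1,\gamma/2)\,t^{\gamma/2}\,\hat p_{c,K}(t,(x,y),(x',y')).
\end{equation*}

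The core work lies in analyzing $D_2$. I would decompose $H - H_\varepsilon$ along the diffusive and drift components of $(L-\tilde L)\tilde p$, and for each component insert an intermediate term to separate differences of coefficients from differences of frozen densities:
\begin{align*}
H^a - H_\varepsilon^a &= \tfrac12\Tr\bigl\{\bigl[(a-a_\varepsilon)(w,z)-(a-a_\varepsilon)(x',y'-x'(t-u))\bigr]D_w^2\tilde p\bigr\}\\
&\quad + \tfrac12\Tr\bigl\{[a_\varepsilon(w,z)-a_\varepsilon(x',y'-x'(t-u))](D_w^2\tilde p - D_w^2\tilde p_\varepsilon)\bigr\},\\
H^b - H_\varepsilon^b &= \langle b - b_\varepsilon,\, D_w\tilde p\rangle + \langle b_\varepsilon,\, D_w\tilde p - D_w\tilde p_\varepsilon\rangle.
\end{align*}
For the first line of $H^a - H_\varepsilon^a$ I would use the mixed H\"older estimate on $a-a_\varepsilon$ already stated in the text together with \eqref{DIF_CONTR} for $|\alpha|=2$, producing a bound of the form $\Delta_{\varepsilon,\sigma,\gamma}(t-u)^{-1+\gamma/2}\hat p_{c,K}$. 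The second line uses \textbf{(A3)} for $a_\varepsilon$ and Lemma \ref{LEM_MT} with $|\alpha|=2$, giving the same time-space control. The second term in $H^b - H^b_\varepsilon$ is treated through \textbf{(A1)} for $b_\varepsilon$ and Lemma \ref{LEM_MT} with $|\alpha|=1$, yielding $\Delta_{\varepsilon,\sigma,\gamma}(t-u)^{-1/2}\hat p_{c,K}$, which I would dominate by $T^{(1-\gamma)/2}\Delta_{\varepsilon,\sigma,\gamma}(t-u)^{-1+\gamma/2}\hat p_{c,K}$ to match the other singularities. Convolving these three contributions with $\tilde p_\varepsilon\le c_1\hat p_{c,K}$ and applying the semigroup property produces the $\Delta_{\varepsilon,\sigma,\gamma}(1\vee T^{(1-\gamma)/2})^2 B(1,\gamma/2)t^{\gamma/2}$ piece of the announced bound.

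The only remaining term is $J:=\tilde p_\varepsilon \otimes \langle b - b_\varepsilon, D_w\tilde p_\varepsilon\rangle$, which is the genuinely new contribution depending on $\Delta_{\varepsilon,b,q}$. When $q=+\infty$ the pointwise bound $|b - b_\varepsilon|\le\Delta_{\varepsilon,b,+\infty}$ reduces the argument to the template above and generates the indicator term $\I_{q=+\infty}\Delta_{\varepsilon,b,+\infty}$ inside the first bracket. The main obstacle is the case $q\in(4d,+\infty)$, where the pointwise bound is unavailable and a spatial H\"older inequality must be used: for each $u\in(0,t)$,
\begin{equation*}
\int_{\R^{2d}} \tilde p_\varepsilon(u,\cdot)\,(b-b_\varepsilon)\,D_w\tilde p_\varepsilon(t-u,\cdot)\,dw\,dz \le \Delta_{\varepsilon,b,q}\,\bigl\|\tilde p_\varepsilon(u,\cdot)\,D_w\tilde p_\varepsilon(t-u,\cdot)\bigr\|_{L^{q'}(\R^{2d})}.
\end{equation*}
The key manipulation is the Gaussian Bayes-type factorization
\begin{equation*}
\hat p_{c,K}(u,(x,y),(w,z))\,\hat p_{c,K}(t-u,(w,z),(x',y')) = \hat p_{c,K}(t,(x,y),(x',y'))\,\hat g_{u,t}(w,z),
\end{equation*}
where $\hat g_{u,t}$ is the degenerate Gaussian bridge density of the proxy at time $u$. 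The Kolmogorov multiscale geometry (see Remark \ref{MULTI_S_REM}) gives $\hat g_{u,t}$ the effective scales $\sqrt{u(t-u)/t}$ in the non-degenerate variable and $(u(t-u)/t)^{3/2}$ in the degenerate variable, hence
\begin{equation*}
\|\hat g_{u,t}\|_{L^{q'}(\R^{2d})}\le C\bigl(u(t-u)/t\bigr)^{-2d/q}.
\end{equation*}
Combining this with $|D_w\tilde p_\varepsilon|\le C(t-u)^{-1/2}\hat p_{c,K}$ and factoring out $\hat p_{c,K}(t,(x,y),(x',y'))$, the $u$-integral becomes
\begin{equation*}
t^{2d/q}\int_0^t (t-u)^{-1/2-2d/q}\, u^{-2d/q}\,du = t^{\alpha(q)}\,B\bigl(\tfrac12+\alpha(q),\alpha(q)\bigr),
\end{equation*}
which converges precisely when $q>4d$, i.e.\ $\alpha(q)>0$. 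Identifying the correct anisotropic scale of the Gaussian bridge in the Kolmogorov geometry is the delicate point of the argument: an isotropic bound would spoil the exponent $4d$ and give an admissible range strictly smaller than $(4d,+\infty)$. Summing the two contributions yields the stated inequality \eqref{esti2}.
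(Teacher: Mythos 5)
Your proposal is correct and follows essentially the same route as the paper: the same split into $(\tilde p-\tilde p_\varepsilon)\otimes H$ and $\tilde p_\varepsilon\otimes(H-H_\varepsilon)$, the same four-way telescoping of $H-H_\varepsilon$, and the same H\"older inequality in $(w,z)$ for the $q\in(4d,\infty)$ drift term. The only cosmetic difference is that the paper manipulates $[\hat p_{c,K}(u,\cdot)\hat p_{c,K}(t-u,\cdot)]^{\bar q}$ directly, rewriting it as a scaled copy of $\hat p_{c\bar q,K}$ and then invoking the semigroup property, whereas you repackage this exact computation as a Gaussian ``Bayes'' factorization through the bridge density $\hat g_{u,t}$; both paths produce the identical anisotropic exponent $2d/q$ (from $d$ diffusive degrees of freedom at scale $\sqrt{u(t-u)/t}$ and $d$ degenerate ones at scale $(u(t-u)/t)^{3/2}$), the same $u^{-2d/q}(t-u)^{-1/2-2d/q}$ singularity, and hence the same threshold $q>4d$ and Beta-function constant $B\bigl(1-\tfrac{2d}{q},\tfrac12-\tfrac{2d}{q}\bigr)=B\bigl(\tfrac12+\alpha(q),\alpha(q)\bigr)$.
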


\begin{proof}
Let us write:
\begin{align}
\label{sum1}
| \tilde p \otimes H^{(1)}(t,(x,y),(x',y'))-\tilde p_\varepsilon \otimes H^{(1)}_\varepsilon(t,(x,y),(x',y')) | \le \notag  \\
|(\tilde p- \tilde p_\varepsilon ) \otimes H(t,(x,y),(x',y')) | +| \tilde p_\varepsilon\otimes \Bigl(H- H_\varepsilon\Bigr) (t,(x,y),(x',y'))|:=I+II .
\end{align}

From Lemma \ref{LEM_MT} and  \eqref{KER_CONTR} we readily get for all $q \in (4d, +\infty]$:
\begin{equation}
\label{CTR_I}
I \le  ((1\vee T^{(1-\gamma)/2})c_1)^2 \Delta_{\varepsilon,\gamma,q}
  \hat p_{c_2,K}(t,(x,y),(x',y'))  B(1, \frac{\gamma}{2})t^{\frac{\gamma}{2}}.
\end{equation}

To estimete $(II)$ let us first consider $H-H_\varepsilon$ more precisely:

\begin{eqnarray}
(H-H_\varepsilon)(t-u,(w,z),(x',y')) \label{DIFF_H_HN_EXP}\\
 = \frac{1}{2}  \Tr\bigg\{ a(w,z) - a(x',y'-x'(t-u) )- a_\varepsilon(w,z) + a_\varepsilon(x',y'-x'(t-u)  \bigg\} \notag \\
 \times  D^2_{w} \tilde p(t-u, (w,z),(x',y')) \notag \\ 
+\frac{1}{2}  \Tr\bigg\{a_\varepsilon (w,z) - a_\varepsilon (x',y'-x'(t-u) ) \bigg\} \bigg[   D_{w}^2 ( \tilde p -  \tilde p_\varepsilon) \bigg](t-u,(w,z),(x',y')) \notag \\ 
 + \langle b(w,z) -b_\varepsilon(w,z), D_{w} \tilde p(t-u,(w,z),(x',y')) \rangle \notag \\
 + \langle b_\varepsilon(w,z), D_{w} (\tilde p -  \tilde p_\varepsilon) (t-u,(w,z),(x',y')) \rangle \notag \\
 := \Bigg( \Delta_\varepsilon^1 H + \Delta_\varepsilon^2 H \Bigg)(t-u,(w,z),(x',y')) \notag \\
 + \langle b(w,z) -b_\varepsilon(w,z), D_{w} \tilde p(t-u,(w,z),(x',y')) \rangle \notag  \\
+ \langle b_\varepsilon(w,z), (D_{w} \tilde p -  D_{w} \tilde p_\varepsilon)(t-u,(w,z),(x',y')) \rangle. \notag
\end{eqnarray}

Since  functions $a(w,z),  a_\varepsilon(w,z)$ are H\"older uniformly continuous and \eqref{DIF_CONTR} holds than:
\begin{eqnarray*}
|\Delta_\varepsilon^1 H|(t-u,(w,z),(x',y')) | \\
\le \frac{c \Delta_{\varepsilon,\gamma, \infty} \left( \left \vert w-x' \right \vert^{\gamma} +\left \vert z-y'+x'(t-u) \right \vert ^{\gamma} \right)  \hat p_{c,K}(t-u,(w,z),(x',y'))}{(t-u) }\\
\le c \Delta_{\varepsilon,\gamma, \infty}  \frac{\hat p_{c_2, K}(t-u,(w,z),(x',y'))}{(t-u)^{1-\gamma/2}}. 
\end{eqnarray*}

From Lemma \ref{LEM_MT} and H\"older uniform continuity  of the function $ a_\varepsilon(x,y)$ it follows:
\begin{eqnarray*}
|\Delta_\varepsilon^2 H|(t-u,(w,z),(x',y')) \\
 \le 
\frac{c \Delta_{\varepsilon,\gamma, \infty} \left( \left \vert w-x' \right \vert^{\gamma} +\left \vert z-y'+x'(t-u) \right \vert ^{\gamma} \right)  \hat p_{c,K}(t-u,(w,z),(x',y'))}{(t-u) }\\
\le c \Delta_{\varepsilon,\gamma, \infty} \frac{\hat p_{\tilde c_2, K}(t-u,(w,z),(x',y')) }{(t-u)^{1-\gamma/2}}.
\end{eqnarray*}

Thus, the fact that $|b(w,z)-b_{\varepsilon}(w,z)|\le c \Delta_{\varepsilon, b, \gamma}$  and \eqref{DIF_CONTR} give the control for $q= +\infty$. Namely,
$$|(H-H_\varepsilon)(t-u,(w,z),(x',y'))| \le \left(1 \vee T^{(1-\gamma)/2} \right) c_1 \Delta_{\varepsilon, \gamma, \infty}
\left[  \frac{\hat p_{c,K}(t-u,(w,z),(x',y')) }{(t-u)^{1-\gamma/2}}\right].$$

For $q \in (4d, +\infty)$ we use H\"older inequality in the  time-space convolution involving the difference of the drifts (last term in \eqref{DIFF_H_HN_EXP}). Set
\begin{eqnarray*}
D(t,(x,y),(x',y')) \\
:=\int_0^t du \int_{\R^{2d}} \tilde p_\varepsilon(u,(x,y),(w,z))  \langle [b_{\varepsilon}(w,z)-b(w,z)] , D_{w} \tilde p (t-u,(w,z)(x',y'))\rangle dw dz.
\end{eqnarray*}
Denoting by $\bar q $ the conjugate of $q$, i.e. $q,\bar q>1, q^{-1}+{\bar q}^{-1}=1 $, we get from \eqref{DIF_CONTR}
 and for $q>d$ that:
\begin{eqnarray*}
|D(t,(x,y),(x',y'))| \le c_1^2 \int_0^t \frac{du}{(t-u)^{1/2}} \|b(.,.)-b_\varepsilon(.,.)\|_{L^q(\R^d)} \\
\times \Big\{ \int_{\R^{2d}}[\hat p_{c,K} (u,(x,y),(w,z)) \hat  p_{c,K}(t-u,(w,z),(x',y'))]^{\bar q} dw dz\Big\}^{1/\bar q} \\
\le c_1^2 \Delta_{\varepsilon,b,q}\int_0^t  \frac{3^{d/q}c^{2d}}{(2\pi)^{2d/q} (c\bar q)^{2d/\bar q}} \\
\times \Big\{ \int_{\R^{2d}} \hat p_{c\bar q,K}(u,(x,y),(w,z)) \hat p_{c\bar q,K} (t-u,(w,z),(x',y')) dw dz \Big\}^{1/\bar q} 
\frac{du}{u^{2d/q}(t-u)^{\frac12+2d/q}} \\
\le c_1^2\left(\frac{\sqrt{3} ct^2}{2\pi}\right)^{d/q}
\bar q^{\frac{d}{\bar q}}
\Delta_{\varepsilon,b,q}\hat p_{c,K}( t,(x,y),(x',y')) \int_0^t \frac{du}{u^{2d/q}(t-u)^{\frac12+2d/q}}.
\end{eqnarray*}

Now, the constraint $4d<q<+\infty$ precisely gives that $\frac12+2d(1-\frac{1}{\bar q})<1$ so that the last integral is well defined. We therefore derive:
\begin{eqnarray*}
|D(t,(x,y),(x',y')| \\
\le c_1^2 t^{\frac12-2d/q}\Delta_{\varepsilon,b,q} \hat p_{c,K} (t,(x,y),(x',y')) B(1-2d/q,\frac12-2d/q).
\end{eqnarray*}
In the case $4d<q<+\infty $, recalling that $\alpha(q)=\frac{1}{2}-\frac{2d}{q} $, we eventually get :
\begin{equation}
\label{CTR_DIFF_FINAL_Q}
\begin{split}
| \tilde p_{\varepsilon}(s,(x,y),(w,z))\otimes \Bigl(H- H_{\varepsilon}\Bigr) (t-u,(w,z),(x',y'))| \\
\le  c_1^2 \hat p_{c,K} (t,(x,y),(x',y')) \{ \Delta_{\varepsilon,b,q}t^{\alpha(q)}B(\frac12+\alpha ( q),\alpha ( q))\\
+ 2\Delta_{\varepsilon,\sigma,\gamma}(1\vee T^{(1-\gamma)/2})t^{\gamma/2}B(1,\gamma/2)\}.
\end{split}
\end{equation}

The statement now follows in whole generality from \eqref{sum1}, \eqref{CTR_I}, \eqref{DIF_CONTR} for $q=\infty$ and \eqref{CTR_DIFF_FINAL_Q} for $4d<q<+\infty $.

\end{proof}

The following Lemma associated with Lemmas \ref{LEM_MT} and \ref{ONE_STEP_C} allows to complete the proof of Theorem \ref{MTHM}.

\begin{LEMME}[Difference of the iterated kernels]
\label{LEMME_DIFF_ITE}
For all $0<t\le T,\ (x,y), (x',y') \in (\R^{2d})^{2} $ and for all $r\in \N$: 
\begin{eqnarray}
\label{ITE}
 |(\tilde p \otimes H^{(r)}-\tilde p_\varepsilon \otimes H_\varepsilon^{(r)})(t,(x,y),(x',y')|  \\
\le C^{r}  r \Delta_{\varepsilon, \gamma, q}\left \{ \frac{t^{\frac{r\gamma}{2}} }{ \Gamma \left( 1+ \frac{r\gamma}{2}  \right) }+
\frac{t^{\frac{(r+2)\gamma}{2}} }{ \Gamma \left( 1+ \frac{(r+2)\gamma}{2}  \right) } 
\right\}  \hat p_{c,K}(t,(x,y),(x',y')). \notag
\end{eqnarray}
\end{LEMME}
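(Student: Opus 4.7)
The plan is to proceed by induction on $r\ge 1$, with the base case $r=1$ being precisely Lemma \ref{ONE_STEP_C}. Using the associativity of the time-space convolution and the relation $H^{(r)}=H^{(r-1)}\otimes H$, adding and subtracting $\tilde p_\varepsilon\otimes H_\varepsilon^{(r-1)}\otimes H$ yields the telescoping recursion
\[
A_r\,:=\,\tilde p\otimes H^{(r)}-\tilde p_\varepsilon\otimes H_\varepsilon^{(r)}\,=\,A_{r-1}\otimes H\,+\,\bigl(\tilde p_\varepsilon\otimes H_\varepsilon^{(r-1)}\bigr)\otimes(H-H_\varepsilon),
\]
so that the task reduces to estimating these two space-time convolutions separately and then summing the contributions.

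For the first term $A_{r-1}\otimes H$ I would insert the induction hypothesis for $A_{r-1}$ together with the pointwise bound $|H(s,\cdot,\cdot)|\le c_1 s^{-1+\gamma/2}\hat p_{c,K}(s,\cdot,\cdot)$ from \eqref{KER_CONTR}. The semigroup property of $\hat p_{c,K}$ factors out $\hat p_{c,K}(t,(x,y),(x',y'))$ from the spatial integral, and the residual time integrals become beta functions $B(1+k\gamma/2,\gamma/2)$ with $k\in\{r-1,r+1\}$. Applying $B(a,b)=\Gamma(a)\Gamma(b)/\Gamma(a+b)$ advances each Gamma denominator by one increment of $\gamma/2$, and the spare $\Gamma(\gamma/2)$ is absorbed into the generic constant $C$. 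This produces a bound of the desired shape with prefactor $r-1$. For the remainder $\bigl(\tilde p_\varepsilon\otimes H_\varepsilon^{(r-1)}\bigr)\otimes(H-H_\varepsilon)$, I would bound $\tilde p_\varepsilon\otimes H_\varepsilon^{(r-1)}$ directly via Lemma \ref{Lemma_bound} and then split $H-H_\varepsilon$ exactly as in \eqref{DIFF_H_HN_EXP}. The $\sigma$ and $L^\infty$-drift pieces both carry the singular scaling $(t-u)^{-1+\gamma/2}\Delta_{\varepsilon,\gamma,\infty}$, which after one more time integration and the same beta-to-Gamma reduction gives the first summand $t^{r\gamma/2}/\Gamma(1+r\gamma/2)$; the $L^q$-drift piece is treated via the Hölder-inequality argument carried out in \eqref{CTR_DIFF_FINAL_Q}, and once combined with the scaling from the preceding $r-1$ convolutions yields the second summand $t^{(r+2)\gamma/2}/\Gamma(1+(r+2)\gamma/2)$. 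Adding this contribution to the induction hypothesis upgrades the prefactor from $r-1$ to $r$.

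The main obstacle I anticipate is reconciling the two distinct time-scalings appearing in the estimate: the natural $\gamma/2$ coming from the Hölder continuity of $\sigma$ and the $L^\infty$ part of $b-b_\varepsilon$, versus the scale $\alpha(q)=\frac12-\frac{2d}{q}$ produced by Hölder's inequality on the $L^q$ part of $b-b_\varepsilon$. Fitting the $L^q$-drift contribution into the single exponent $(r+2)\gamma/2$ of the claim requires using $t\le T$ to absorb appropriate powers of $T$ into the constant $C$, and it is precisely to accommodate this somewhat lossy step that the statement carries two distinct time-powers. Once this bookkeeping is carried out, the beta-to-Gamma telescoping is purely mechanical and produces the $1/\Gamma(1+r\gamma/2)$ denominators that are essential for the absolute convergence of the parametrix series in the proof of Theorem \ref{MTHM}.
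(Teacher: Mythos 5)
Your proof follows essentially the same inductive strategy the paper uses: the telescoping decomposition $A_r=A_{r-1}\otimes H+\bigl(\tilde p_\varepsilon\otimes H_\varepsilon^{(r-1)}\bigr)\otimes(H-H_\varepsilon)$, the pointwise kernel bound \eqref{KER_CONTR}, the global bound on $\tilde p_\varepsilon\otimes H_\varepsilon^{(r-1)}$ from Lemma \ref{Lemma_bound}, the splitting of $H-H_\varepsilon$ as in \eqref{DIFF_H_HN_EXP}, the semigroup property of $\hat p_{c,K}$, and the beta-to-Gamma reduction. The paper's own proof is precisely this decomposition written down without any of the bookkeeping details, so your proposal is an expansion of the paper's argument rather than an alternative to it.

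The one point that deserves scrutiny is exactly the one you flag as your "main obstacle": reconciling the $L^q$-drift time power $t^{\alpha(q)}$, $\alpha(q)=\tfrac12-\tfrac{2d}{q}$, with the two powers $t^{r\gamma/2}$ and $t^{(r+2)\gamma/2}$ appearing in the claimed bound. Your proposed fix, namely paying a factor $T^{\,\cdot}$ to shift the $t$-exponent, works only when the exponent you start from is \emph{larger} than the target, i.e.\ when $\alpha(q)\ge\gamma/2$. For $4d<q<4d/(1-\gamma)$ one has $\alpha(q)<\gamma/2$, so $t^{\alpha(q)+(r-1)\gamma/2}$ decays strictly more slowly as $t\downarrow 0$ than either $t^{r\gamma/2}$ or $t^{(r+2)\gamma/2}$, and multiplying by a power of $T$ does not help. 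Your sketch does not close this case. To be fair, the paper's one-line proof of the lemma is silent on the same point (it simply cites the induction hypothesis), so this is a gap you inherit from the source rather than an error you introduce; but since you explicitly identify it as the crux and then assert it is handled by $t\le T$, the assertion as written is incorrect on the sub-range of $q$ just indicated.
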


\begin{proof}
Observe that Lemmas \ref{LEM_MT} and  \ref{ONE_STEP_C} respectively give \eqref{ITE} for $r=0 $ and $r=1$. Let us assume that it holds for a given $r\in \N^*$ and let us prove it for  $r+1$.

Let us denote for all $r\ge 1,$ \\$ \eta_r(t,(x,y),(x',y')):= |(\tilde p \otimes H^{(r)}-\tilde p_{\varepsilon} \otimes H_\varepsilon^{(r)})(t,(x,y),(x',y'))|.$ Write
\begin{align*}
\eta_{r+1}(t,(x,y),(x',y')) = \left \vert ( \tilde p \otimes H^{(r)}-\tilde p_{\varepsilon} \otimes H^{(r)}_{\varepsilon} ) \otimes H(t,(x,y),(x',y')) \right \vert
\\
+\left \vert \tilde p_{\varepsilon} \otimes H_\varepsilon^{(r)} \otimes (H-H_{\varepsilon})(t,(x,y),(x',y')) \right \vert \\
\le \eta_r \otimes \left \vert H \right \vert (t,(x,y),(x',y')) + \left \vert \tilde p_{\varepsilon} \otimes H^{(r)}_{\varepsilon} \right \vert \otimes \left \vert (H-H_{\varepsilon}) \right \vert (t,(x,y),(x',y')).
\end{align*}
Thus, the induction hypothesis we get the result.
\end{proof}

Theorem 1 now simply follows from the controls of Lemma \ref{LEMME_DIFF_ITE}, the parametrix expansions \eqref{1} and \eqref{2} of
the densities $p,p_{\varepsilon}$ and the asymptotics of the gamma function.

\section*{Acknowledgement}
I would like to thank Valentin Konakov and Stephane Menozzi for the problem statement and fruitful discussion during the preparation of this work.

\bibliographystyle{alpha}
\bibliography{bibli}

\end{document}